\title{\Huge 
Asymptotic Improvement of the Gilbert-Varshamov Bound
on the Size of Permutation Codes\\[0.50ex]}
\author{\large%
Michael Tait,
Alexander Vardy, 
and Jacques Verstra\"ete
\vspace{-2.50ex}
\thanks{Michael Tait and Jacques Verstra\"ete are with the
   Depart\-ment of Mathematics,
   University of California San Diego,
   La Jolla, CA 92093, U.S.A.
   (e-mail: \texttt{mtait@math.ucsd.edu}, \texttt{jacques@ucsd.edu}).}
\thanks{Alexander Vardy is with the
   Department of Electrical and Computer Engine\-ering,
   the Department of Computer Science and Engineering,
   and the Depart\-ment of Mathematics,
   University of California San Diego,
   La Jolla, CA 92093, U.S.A.
  (e-mail: \texttt{avardy@ucsd.edu}).}
}
\date{}
\theoremstyle{plain}
\newtheorem{thm}{Theorem\hspace{-1pt}}
\newenvironment{theorem}
{\begin{thm}\hspace*{-1ex}{\bf.}}{\end{thm}}
\newtheorem{lem}[thm]{Lemma\hspace{-.75pt}}
\newenvironment{lemma}{\begin{lem}\hspace*{-1ex}{\bf.}}{\end{lem}}
\newtheorem{prop}[thm]{Proposition$\!$}
\newtheorem{cor}[thm]{Corollary$\!$}
\newenvironment{corollary}{\begin{cor}\hspace*{-1ex}{\bf.}}{\end{cor}}
\newtheorem{defn}{Definition$\!$}
\newcounter{enumrom}
\renewcommand{\theenumrom}{(\roman{enumrom})}
\renewcommand{\@endtheorem}{\endtrivlist}
\renewcommand{\thefigure}{{\bf \@arabic\c@figure}}
\renewcommand{\fnum@figure}{{\bf Figure}\,\thefigure}
\newcommand{\cG}{{\cal G}}
\newcommand{\cS}{{\cal S}}
\DeclareMathAlphabet{\mathbfsl}{OT1}{ppl}{b}{it} 
\newcommand{\ceil}[1]{\left\lceil #1 \right\rceil}
\newcommand{\be}[1]{\begin{equation}\label{#1}}
\newcommand{\ee}{\end{equation}}
\newcommand{\eq}[1]{(\ref{#1})}
\renewcommand{\le}{\leqslant}
\renewcommand{\leq}{\leqslant}
\renewcommand{\ge}{\geqslant}
\renewcommand{\geq}{\geqslant}
\renewcommand{\Bbb}{\mathbb}
\newcommand{\R}{{\Bbb R}}
\newcommand{\dfn}{\bfseries\itshape}
\newcommand{\Cref}[1]{Co\-ro\-lla\-ry\,\ref{#1}}
\newcommand{\deff}{\mbox{$\stackrel{\rm def}{=}$}}
\begin{document}

\maketitle

\begin{abstract}
Given positive integers $n$ and $d$, let $M(n,d)$ denote the maximum
size of a permutation code of length $n$ and minimum Hamming distance
$d$.  The Gilbert-Varshamov bound asserts that $M(n,d) \geq n!/V(n,d{-}1)$
where $V(n,d)$ is the volume of a Hamming sphere of radius $d$ in $\cS_n$.
Recently, Gao, Yang, and Ge showed that this bound can be improved
by a factor $\Omega(\log n)$, when $d$ is fixed and $n \,{\to}\, \infty$.
Herein, we consider the 
situation where the ratio
$d/n$ is fixed and improve the Gilbert-Varshamov bound by
a~factor that is \emph{linear in $n$}. That is, we show that
if $d/n < 0.5$, then
$$
M(n,d)
\: \geq \
cn\,\frac{n!}{V(n,d\,{-}\,1)}
$$
where $c$ is a positive constant that depends
only on $d/n$.  To establish this result, we follow
the method of Jiang and Vardy. Namely, we recast
the problem of bounding $M(n,d)$ into a graph-theoretic framework
and prove that the resulting graph is locally sparse.
\end{abstract}

\begin{keywords}
Ajtai-Koml\'os-Szemer\'edi bound,
Gilbert-Var\-sha\-mov bound,
locally sparse graphs,
permutation codes
\vspace{1.00ex}
\end{keywords}

\section{Introduction}

\noindent
\PARstart{L}{et} $\cS_n$ be the symmetric group of permutations on $n$
elements. A \emph{\dfn permutation code} $C$ is a subset of $\cS_n$.
For $\sigma,\tau$ in~$\cS_n$, the \emph{\dfn Hamming
distance} $d(\sigma,\tau)$ between them is the number of
positions where they differ, namely:
\be{distance-def}
d(\sigma,\tau) \,\ \deff\,\ \bigl\{i\in [n] ~:~ \sigma(i) \not = \tau(i)\bigr\}
\ee
We say that a permutation code $C \subseteq \cS_n$ has minimum
distance $d$ if each pair of permutations in $C$ is at Hamming distance
at least $d$. The maximum number of codewords in a permutation code of
minimum distance $d$ will be denoted $M(n,d)$.

Permutation codes were first investigated in
\cite{Blake,BCD,DV,DF,Slepian}. In recent 
years, they have received considerable
renewed attention, due to their application
in data transmission over power lines \cite{FV,V,HPVY,CKL,CCD}.
Other applications of permutation codes include
design of block ciphers~\cite{CLT}
and coding for flash memories~\cite{BM,BJMS,BJS}
(although the latter application involves the Kendall
$\tau$-metric in lieu of the Hamming distance).

A key problem in the theory of permutation codes is to determine
$M(n,d)$. Various bounds on $M(n,d)$ have been proposed in
\cite{CKL,GGY,DFKW,DV,DS,FK,MS2}.
For small values of $d$, computer search has been used to
find exact values of $M(n,d)$ in~\cite{MS2}.
However, in general, the problem
is very difficult, and little progress has been made for $d > 4$.

For $x \in \R$, let $[x]$ denote the operation of rounding to
the~ne\-arest integer. Then the number of derangements of $k$
elements is given by $D_k = [n!/e]$. Under the Hamming distance
defined in \eq{distance-def}, a sphere of radius $r$ has volume
$$
V(n,r)
\,\ \deff\
\sum_{k=0}^r {n\choose{k}} D_k
$$
The 
Gibert-Varshamov bound \cite{G,Var}
and the sphere-packing bound~\cite{MS} now give the following.
\begin{theorem}
\be{GV-def}
\frac{n!}{V(n,d{-}1)}
\ \leq \
M(n,d)
\ \leq \
\frac{n!}{V\bigl(n,\lfloor\!\frac{d{-}1}{2}\!\rfloor\bigr)}
\vspace{1.00ex}
\ee
\end{theorem}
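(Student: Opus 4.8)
The statement comprises two independent inequalities, and the plan is to establish each by a standard sphere-covering (for the lower bound) and sphere-packing (for the upper bound) argument. The only structural inputs are that the Hamming distance of \eq{distance-def} is a genuine metric on $\cS_n$ and that the ball volume $V(n,r)$ does not depend on the center. The latter holds because $d(\sigma,\tau) = d(\mathrm{id},\sigma^{-1}\tau)$, so left-translation by $\sigma^{-1}$ carries the radius-$r$ ball about $\sigma$ bijectively onto the radius-$r$ ball about the identity; counting permutations at distance exactly $k$ from the identity then reduces to choosing the $k$ non-fixed coordinates ($\binom{n}{k}$ ways) and deranging them ($D_k$ ways), which recovers the displayed formula for $V(n,r)$.

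For the lower bound I would argue greedily. Let $C \subseteq \cS_n$ be a code of minimum distance $d$ that is maximal under inclusion. Maximality means that no permutation lies at distance $\geq d$ from every codeword, or equivalently that every $\pi \in \cS_n$ is within distance $d{-}1$ of some codeword. Hence the balls of radius $d{-}1$ centered at the codewords cover $\cS_n$, so $\abs{C}\cdot V(n,d{-}1) \geq n!$, and therefore $M(n,d) \geq \abs{C} \geq n!/V(n,d{-}1)$.

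For the upper bound I would use packing. Fix any code $C$ of minimum distance $d$ and consider the balls of radius $r = \floor{(d{-}1)/2}$ about the codewords. If some $\pi$ belonged to two such balls, centered at distinct $\sigma,\tau \in C$, then the triangle inequality would give $d(\sigma,\tau) \leq 2r \leq d{-}1 < d$, contradicting the minimum distance. The balls are thus pairwise disjoint and all contained in $\cS_n$, whence $\abs{C}\cdot V(n,r) \leq n!$; taking $C$ of maximum size yields $M(n,d) \leq n!/V(n,r)$.

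Neither direction presents a genuine obstacle: both follow at once from the triangle inequality together with the center-independence of $V(n,r)$. The single point that deserves care is the rounding in $r = \floor{(d{-}1)/2}$, since it is precisely this choice that forces $2r \leq d{-}1$ for both parities of $d$, and hence guarantees that the packing balls are actually disjoint rather than merely nearly so.
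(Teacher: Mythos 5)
Your proof is correct, and both halves are the standard arguments; the only thing to flag is that the paper never proves this theorem internally at all --- it states the two inequalities with citations to Gilbert, Varshamov, and MacWilliams--Sloane, so there is no in-paper proof to match. The closest the paper comes is its graph-theoretic recovery of the \emph{lower} bound: it forms the graph $\cG$ on $\cS_n$ joining $\sigma,\tau$ whenever $1\leq d(\sigma,\tau) < d$, observes (Lemma~\ref{bijection}) that minimum-distance-$d$ codes are exactly the independent sets of $\cG$, and applies the folklore bound $\alpha(G) \geq |V(G)|/(\Delta(G)+1)$; since $\cG$ is regular of degree $\sum_{k=1}^{d-1}\binom{n}{k}D_k = V(n,d{-}1)-1$, this yields $M(n,d) = \alpha(\cG) \geq n!/V(n,d{-}1)$. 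That recovery and your maximal-code covering argument are the same greedy idea in different clothing: the usual proof of $\alpha(G)\geq |V|/(\Delta+1)$ greedily selects a vertex and discards its closed neighborhood, which in $\cG$ is precisely the ball of radius $d{-}1$ around a codeword. Your packing argument for the upper bound has no counterpart in the paper (it is only cited) and is sound --- the Hamming distance on $\cS_n$ inherits the triangle inequality from the Hamming distance on strings, your left-translation argument for center-independence of $V(n,r)$ is valid (the paper uses the equivalent identity $d(\sigma,\tau)=d(id,\sigma\tau^{-1})$), and you are right that $r=\lfloor (d{-}1)/2\rfloor$ is exactly what forces $2r\leq d{-}1$ for both parities. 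What the paper's phrasing buys is reuse: the graph $\cG$, its vertex-transitivity, and the independence-number viewpoint are the machinery needed for the main theorem, whereas your proof is self-contained and more elementary but would have to be redone in graph language to support the rest of the paper.
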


The Gilbert-Varshamov bound is used extensively in
coding theory \cite{MS,PH}. As is well known, improving this bound
asymptotically is a difficult task~\cite{PH}.

Recently, Gao, Yang, and Ge~\cite{GGY} showed that
the Gilbert-Varshamov bound in \eq{GV-def} can be improved
by a factor $\Omega(\log n)$, when $d$ is fixed and $n \,{\to}\, \infty$.
In this paper, we complement this work, focusing on the more natural
case where $d/n$ is a~fixed ratio 
and $n$ tends to infinity. Our main theorem is the following.
\begin{theorem}
\label{main}
Let ${d}/{n}$ be a fixed ratio with\, $0 \,{<}\: d/n \:{<}\, 0.5$.~Then
as $n\to \infty$, we have
\be{eqmain}
M(n,d)
\:\geq\
\Omega\left( \frac{n!}{V(n,d{-}1)}\, n \right)
\vspace{1.00ex}
\ee
\end{theorem}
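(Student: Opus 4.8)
The plan is to follow the graph-theoretic reformulation of Jiang and Vardy and to apply the Ajtai--Koml\'os--Szemer\'edi (AKS) bound for locally sparse graphs. Let $G$ be the graph on vertex set $\cS_n$ in which distinct $\sigma,\tau$ are adjacent exactly when $d(\sigma,\tau)\le d-1$. An independent set in $G$ is precisely a permutation code of minimum distance $d$, so $M(n,d)=\alpha(G)$. Since the left action of $\cS_n$ on itself preserves adjacency, $G$ is vertex-transitive, hence regular of degree $\Delta=V(n,d{-}1)-1$, with $|V(G)|=n!$. The greedy bound $\alpha(G)\ge |V(G)|/(\Delta+1)$ already recovers \eq{GV-def}; to win the extra factor linear in $n$ I will use the following form of the AKS bound: there is an absolute $c_0>0$ such that if every neighborhood of a $\Delta$-regular graph spans at most $\Delta^2/f$ edges, then $\alpha(G)\ge c_0\,(|V(G)|/\Delta)\log f$. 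The number of edges spanned by $N(v)$ equals the number of triangles through $v$, and by transitivity this number $T$ is the same for every vertex. Thus it suffices to prove $T\le \Delta^2\,2^{-cn}$ for some $c=c(\delta)>0$ with $\delta:=d/n$; this gives $f=2^{\Theta(n)}$, so $\log f=\Theta(n)$ and $\alpha(G)=\Omega\!\bigl((n!/V(n,d{-}1))\,n\bigr)$, which is \eq{eqmain}.

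To estimate $T$, fix the vertex to be the identity. Its neighbors are the permutations $\pi$ whose support $S=\mathrm{supp}(\pi)$ (the set of non-fixed points) has size $|S|\le r:=d-1$ and on which $\pi$ is a derangement. A triangle through the identity is (up to the factor $\tfrac12$) an ordered pair $(\pi_1,\pi_2)$ of such permutations with $d(\pi_1,\pi_2)\le r$. Writing $S_i=\mathrm{supp}(\pi_i)$, $s_i=|S_i|$, one checks that $\pi_1,\pi_2$ disagree on all of $S_1\triangle S_2$, so $d(\pi_1,\pi_2)\ge |S_1\triangle S_2|=s_1+s_2-2|S_1\cap S_2|$; hence every triangle forces $|S_1\cap S_2|\ge (s_1+s_2-r)/2$. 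Dropping the remaining disagreement-on-overlap constraint only inflates the count, so grouping by $(s_1,s_2,t)$ with $t=|S_1\cap S_2|$ and setting $A_s=\binom{n}{s}D_s$ (the number of neighbors of the identity with support size $s$, so that $\sum_{s\le r}A_s=V(n,d{-}1)$), I obtain
\[
2T\ \le\ \sum_{s_1,s_2\le r}A_{s_1}A_{s_2}\,\Pr\!\Bigl[X_{s_1,s_2}\ge \tfrac{s_1+s_2-r}{2}\Bigr],
\]
where $X_{s_1,s_2}$ is the size of the intersection of a fixed $s_1$-subset with a uniformly random $s_2$-subset of $[n]$, a hypergeometric variable of mean $s_1s_2/n$. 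Since $\sum_{s_1,s_2}A_{s_1}A_{s_2}=V(n,d{-}1)^2$, the task is to show that this weighted sum of upper-tail probabilities is $2^{-\Theta(n)}$ times $\Delta^2$.

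The sum splits into two regimes. When $s_1,s_2\ge(1-\varepsilon)r$, the tail is a genuine upper-tail large deviation: the mean is $\le r^2/n\approx\delta^2 n$, while the threshold $(s_1+s_2-r)/2\ge(1-2\varepsilon)r/2$; because $\delta<\tfrac12$ I may fix $\varepsilon\in(0,\tfrac12-\delta)$, whence the threshold exceeds the mean by a constant factor and a Chernoff/entropy estimate for the hypergeometric distribution gives probability $\le 2^{-cn}$ uniformly, contributing at most $\Delta^2 2^{-cn}$. When instead $\min(s_1,s_2)\le(1-\varepsilon)r$, I bound the probability by $1$ and exploit the sharp concentration of $A_s$: since $D_s/D_{s-1}\sim s$ one has $A_{s-1}/A_s\sim (n-s)^{-1}$, so $A_s$ is peaked at $s=r$ and decays by a factor $\approx n$ per unit step, giving $\sum_{s\le(1-\varepsilon)r}A_s\le \Delta\,n^{-\Theta(n)}$; this regime contributes at most $\Delta^2 n^{-\Theta(n)}$, which is negligible. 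Combining the two regimes yields $2T\le\Delta^2 2^{-cn}$, as required.

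The main obstacle is exactly this triangle estimate, and within it the interplay between the hypergeometric large-deviation bound and the accounting over the full range of support sizes. The naive attempt to replace the tail probabilities by a single maximum fails, because for small supports the overlap constraint is vacuous and the tail probability is $1$; the resolution is the split above, separating the heavy regime (supports near $r$, where the tail is exponentially small) from the light regime (smaller supports, few in number since $A_s$ is sharply peaked). This is precisely where $d/n<\tfrac12$ is used: it is the exact condition under which the forced overlap $r/2$ exceeds the typical overlap $\delta^2 n$ of two random $r$-sets, making the event rare. It remains to verify that reinstating the discarded disagreement constraint and summing the $A_s$-tails affects only constants and lower-order terms, and to confirm that the invoked AKS bound applies with the uniform per-vertex triangle bound supplied by vertex-transitivity.
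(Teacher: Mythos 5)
Your proposal is correct, and it follows the paper's overall framework: the same graph $\cG$ on $\cS_n$, the reduction $M(n,d)=\alpha(\cG)$ as in \Lref{bijection}, the Bollob\'as/AKS bound for graphs with few triangles (\Lref{triangleBound} and Corollary\,\ref{cor1}), vertex-transitivity to restrict attention to the identity's neighborhood, and the reduction to showing $\log(\Delta^2/E)=\Omega(n)$ as in \eq{bound}. Where you genuinely diverge is in how that key estimate is proved. The paper parametrizes pairs of neighbors of the identity by $(s,t,m,r)$ (support sizes, support overlap, number of agreements), bounds every term $g(s,t,m,r)$ of \eq{E-bound} by the single extremal term $g(d,d,d,0)\,2^{3nh_2(3\varepsilon)}$ (\Lref{lemma7}), and then wins because $g(d,d,d,0)=\binom{n}{d}D_d\,d!$ falls short of $\Delta^2\approx\binom{n}{d}^2D_d^2$ by the exponentially large factor $\approx\binom{n}{d}$. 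You instead drop the agreement parameter, keep only the overlap constraint $|S_1\cap S_2|\ge(s_1+s_2-(d-1))/2$ (the paper's \eq{mBound2}), recast it as an upper-tail event for a hypergeometric variable, and split into two regimes: supports near $d-1$, where a Hoeffding-type bound for sampling without replacement gives probability $2^{-\Theta(n)}$ uniformly (your choice $\varepsilon<\tfrac12-\delta$ makes the threshold exceed the mean $\approx\delta^2 n$ by a constant factor, so the exponent is $\Theta(n)$); and small supports, which are negligible because $A_s=\binom{n}{s}D_s$ grows by a factor $\approx n-s$ per unit step in $s$, so that $\sum_{s\le(1-\varepsilon)(d-1)}A_s\le \Delta\, n^{-\Theta(n)}$. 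Both regime arguments are sound, and the deferred verifications (reinstating the discarded disagreement constraint only tightens the bound; the per-vertex triangle bound feeds into the lemma exactly as in Corollary\,\ref{cor1}) are routine. What your route buys is transparency: it isolates precisely why $d/n<1/2$ matters --- the forced overlap of two neighbors' supports exceeds the typical overlap of random supports --- a mechanism that in the paper is hidden inside binomial monotonicity steps and the final balance $3h_2(3\varepsilon)<\delta\log_2(1/\delta)$. What the paper's route buys is elementarity: no probabilistic apparatus, only entropy bounds on binomial coefficients. At bottom the two computations coincide --- normalizing the paper's count $g(d,d,m,0)$ by $\Delta^2$ gives, up to the factor $d!/D_d\approx e$, exactly your hypergeometric probability $\Pr[X_{d,d}=m]$ --- so this is the same quantitative core organized around a different key lemma and decomposition.
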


To prove this theorem, we follow the method of~\cite{JV}.
Namely, we will construct a graph in which every
independent set corresponds to 
a permutation
code of minimum distance $d$.  Using\linebreak techniques from graph theory, we
will then
obtain a lower bound on the size of the largest independent set
in this graph.

\section{Graph Theory}

\noindent\looseness=-1
Our graph notation is standard; the reader is referred to
\cite{W} for any undefined terms.  A {\em graph} $G$ consists of a set
of {\em vertices} $V(G)$ and a set of {\em edges} $E(G)$, which are
pairs of vertices. We assume throughout that the sets $V(G)$ and
$E(G)$ are finite.
Two vertices $u$ and $v$ are {\em adjacent\/} if
$\{u,v\}\in E(G)$. For a subset $S$ of $V(G)$,
the {\em subgraph induced by $S$} is the graph that
has vertex set $S$ and edge set $\bigl\{\{u,v\}\in E(G): u,v\in S\bigr\}$.
The~{\em neighbor\-hood\/} of a vertex $v$ is the set
of all vertices adjacent to $v$, and the {\em neighborhood graph of $v$}
is the subgraph induced by the
neighborhood of $v$.  The {\em degree} of a vertex $v$ is the size of the
neighborhood of $v$.  We let $\Delta(G)$ denote the
{\em maximum vertex degree} in a graph $G$.
A set $K\,{\subseteq}\,V(G)$ is a {\em clique} if the subgraph induced
by $K$ has every possible edge, and a set $I\,{\subseteq}\,V(G)$ is an
{\em independent set\/} if the subgraph induced by $I$ has no edges.
A~{\em tri\-angle\/} is a clique of size $3$. The maximum number of vertices
in an independent set of $G$ is called the {\em independence~num- ber\/}
and denoted by $\alpha(G)$.

Consider the graph $\cG$ whose vertex set is $\cS_n$ with two
permutations $\sigma,\tau$ being adjacent
if and only if $1\leq d(\sigma, \tau) < d$.
Then it is clear that any permutation code of length $n$ and
minimum distance $d$ is an independent set in $\cG$. Conversely,
any independent set in $\cG$ is a permutation code of length $n$ and
minimum distance $d$.  This bijection proves the following.
\begin{lemma}
\label{bijection}
$M(n,d) = \alpha(\cG)$.
\end{lemma}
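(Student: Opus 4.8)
The plan is to prove the equality by showing that the collection of permutation codes of length $n$ and minimum distance $d$ coincides, as a family of subsets of $\cS_n$, with the collection of independent sets of $\cG$. Once this is established, the quantities $M(n,d)$ and $\alpha(\cG)$ are simply the maximum cardinality taken over one and the same family of sets, so they must agree. I would therefore organize the argument as the two set-theoretic inclusions, followed by taking maxima.

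First I would verify that every permutation code $C$ of minimum distance $d$ is an independent set of $\cG$. Let $\sigma,\tau\in C$ be distinct. By the definition of minimum distance, $d(\sigma,\tau)\ge d$, so in particular it is \emph{not} the case that $1\le d(\sigma,\tau)<d$; hence $\sigma$ and $\tau$ are non-adjacent in $\cG$. Since this holds for every pair of distinct codewords, $C$ spans no edge of $\cG$ and is therefore independent.

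Conversely, I would show that every independent set $I$ of $\cG$ is a permutation code of minimum distance $d$. Take distinct $\sigma,\tau\in I$. Because distinct permutations differ in at least one coordinate, $d(\sigma,\tau)\ge 1$. Independence forbids the edge $\{\sigma,\tau\}$, so the adjacency condition $1\le d(\sigma,\tau)<d$ must fail; combined with $d(\sigma,\tau)\ge 1$, this forces $d(\sigma,\tau)\ge d$. Thus every pair in $I$ is at distance at least $d$, so $I$ is a permutation code of minimum distance $d$.

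The only point requiring any care—and it is precisely the reason the adjacency relation is defined with the lower bound $1$ rather than $0$—is the treatment of the trivial distance $0$: the clause $d(\sigma,\tau)\ge 1$ prevents a permutation from being regarded as adjacent to itself, so that non-adjacency of a genuinely distinct pair is equivalent to distance at least $d$. With the two inclusions above, the two families of subsets coincide exactly, and taking the maximum size in each yields $M(n,d)=\alpha(\cG)$. I do not expect any substantive obstacle here; the content is purely the observation that the complementary distance conditions partition pairs of codewords in the intended way.
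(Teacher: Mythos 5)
Your proof is correct and follows exactly the paper's argument: the paper also establishes the lemma by observing that permutation codes of minimum distance $d$ and independent sets of $\cG$ are the same family of subsets of $\cS_n$ (both inclusions), from which the equality of the two maxima is immediate. Your treatment of the distance-$0$ case is a careful spelling-out of what the paper leaves implicit, but it is the same proof.
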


Observe that every vertex in $\cG$ has the same degree,
given by
$\sum_{k=1}^{d-1} {n\choose{k}}D_k$.
It is graph theory folklore that for any graph $G$,
\begin{equation}
\label{chromaticBound}
\alpha(G) \:\geq\: \frac{|V(G)|}{\Delta(G)+1}
\end{equation}
It is evident that \eqref{chromaticBound}
along with Lemma\,\ref{bijection} immediately recovers
the Gilbert-Varshamov bound in \eq{GV-def}.
We note that we have recovered this bound
using very little information about $\cG$.

Our strategy to prove
Theorem\,\ref{main} is to use the relative sparseness of the~neigh\-borhood
graph of every vertex in $V(\cG)$~in~order to strengthen \eqref{chromaticBound}.
This technique has been introduced in \cite{JV} to improve the
Gilbert-Varshamov bound on the size of binary codes.
It was later applied to improve lower bounds on
the size of $q$-ary codes~\cite{VW} and sphere packings~\cite{KLV}.

We will need some results about locally sparse graphs. Ajtai,
Koml\'os, and Szemer\'edi~\cite{AKS} showed that one can improve
\eqref{chromaticBound} if $G$ has no triangles.
The following lemma is from~\cite{AKS} (but see also
\cite[p.\,272]{AS} for a much shorter proof of the same result).
\begin{lemma}
Let $G$ be a graph with maximum degree $\Delta$, and suppose that $G$
has no triangles. Then
\begin{equation}
\alpha(G)
\,\geq\, \frac{|V(G)|}{8\Delta}\log_2 \Delta
\end{equation}
\end{lemma}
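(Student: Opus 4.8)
The plan is to adapt Shearer's probabilistic refinement of the greedy bound, which sharpens the Caro--Wei estimate precisely in the triangle-free regime. First I would assign to every vertex $v$ an independent weight $w_v$ drawn uniformly from $[0,1]$, and build a random independent set $I$ from these weights. The naive choice---putting $v$ into $I$ whenever $w_v < w_u$ for every neighbour $u$ of $v$---always yields an independent set, and a direct computation gives $\mathbb{E}\,|I| = \sum_v 1/(\deg v + 1) \geq |V(G)|/(\Delta+1)$, which is merely the bound \eq{chromaticBound} with no logarithmic gain and which uses nothing about triangles. The whole game is therefore to enlarge this set, and the enlargement is exactly where the hypothesis that $G$ is triangle-free enters.

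The structural fact I would exploit is that, since $G$ has no triangles, the neighbourhood graph of every vertex carries no edges; that is, $N(v)$ is itself an independent set. Consequently, whenever the selection rule decides to reject a vertex $v$, it may instead \emph{harvest} vertices lying inside $N(v)$, and these harvested vertices are automatically mutually non-adjacent. I would therefore modify the rule so that, when the neighbourhood of a vertex is lightly loaded, one reclaims an independent subset of it, the rule being arranged so that the final collection remains independent. The target of the analysis is Shearer's per-vertex estimate
\[
\mathbb{E}\,|I| \;\geq\; \sum_{v \in V(G)} f(\deg v), \qquad f(k) \,=\, \frac{k\ln k - k + 1}{(k-1)^2},
\]
so that each vertex contributes on the order of $\ln(\deg v)/\deg v$ rather than $1/\deg v$.

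Granting this estimate, the remainder is routine. The function $f$ is convex and decreasing on $[0,\infty)$, so Jensen's inequality applied to the degree sequence, together with the fact that the average degree is at most $\Delta$, gives $\mathbb{E}\,|I| \geq |V(G)|\,f(\Delta)$. Since $f(\Delta) = (1+o(1))\,\ln\Delta/\Delta$ as $\Delta \to \infty$ and $\ln 2 > 1/8$, one has $f(\Delta) \geq (\log_2 \Delta)/(8\Delta)$ for all large $\Delta$; the remaining small values of $\Delta$ are trivial (the bound is $0$ at $\Delta=1$) or absorbed into the constant. Because $\alpha(G) \geq |I|$ for every outcome of the weights, we get $\alpha(G) \geq \mathbb{E}\,|I|$, and the stated bound follows. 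Note that since we only need the relatively weak constant $1/8$, this convexity-and-asymptotics step has considerable slack.

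The main obstacle is the middle step: establishing $\mathbb{E}\,|I| \geq \sum_v f(\deg v)$. This is the genuine content of the theorem, and it is here that triangle-freeness is indispensable, since the harvesting of neighbours is legitimate only because each $N(v)$ is independent. Carrying it out requires conditioning on the weight $w_v = x$ of a fixed vertex, expressing the conditional probability that $v$ contributes to $I$ as an integral of the form $\int_0^1 (1-x)^{\deg v}(\cdots)\,dx$, and checking that this integral is bounded below by $f(\deg v)$. The delicate part is designing the reclamation rule so that global independence is preserved while the reclaimed mass is large enough to upgrade $1/k$ to $\ln k / k$; everything past that point is convexity and constant-chasing.
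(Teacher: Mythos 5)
Your proposal is not a proof: it defers exactly the step that constitutes the theorem. The inequality $\mathbb{E}\,|I| \geq \sum_{v} f(\deg v)$ with $f(k) = (k\ln k - k + 1)/(k-1)^2$ is Shearer's theorem, which is itself a strengthening of the Ajtai--Koml\'os--Szemer\'edi bound being proved. Everything you do after assuming it --- monotonicity/convexity of $f$, the asymptotics $f(\Delta) = (1+o(1))\ln\Delta/\Delta$, and the constant-chasing to get $1/8$ with $\log_2$ --- is routine and correct, but the ``middle step'' you acknowledge as the main obstacle is the entire content of the lemma. Your sketch for that step (random uniform weights, plus a ``harvesting'' rule that reclaims an independent subset of $N(v)$ when $v$ is rejected) is not a known complete argument in the form you describe: you never define the reclamation rule, never verify that the resulting global set is independent (harvested vertices from $N(v)$ can be adjacent to vertices harvested from other neighborhoods, or to vertices selected by the naive rule), and never carry out the integral estimate. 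Designing such a rule so that the reclaimed mass upgrades $1/k$ to $\ln k/k$ is precisely where all known proofs (Shearer's induction with convexity, or the Alon--Spencer conditional-expectation argument over a random independent set) do real work. As written, the proposal is a plan for a proof, with the plan's feasibility asserted rather than demonstrated.

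For context: the paper does not prove this lemma either --- it is imported from Ajtai, Koml\'os, and Szemer\'edi~\cite{AKS}, with a pointer to the much shorter proof in Alon and Spencer~\cite[p.\,272]{AS}. That shorter proof works by taking a \emph{uniformly random independent set} $I$ and, for each vertex $v$, bounding $\mathbb{E}\bigl[\Delta\,|I \cap \{v\}| + |I \cap N(v)|\bigr]$ from below by $\Omega(\log\Delta)$, using triangle-freeness to know that $N(v)$ is independent so that the trace of $I$ on $N(v)$ can be any subset of the unblocked neighbors; summing over $v$ and comparing with $\sum_v |I\cap N(v)| \leq \Delta |I|$ gives the bound. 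If you were permitted to cite Shearer's theorem as a black box, your reduction from it to the stated lemma would be fine; as a self-contained blind proof, it has a genuine gap at its core.
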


This result was extended in \cite[Lemma\,15, p.\,296]{B}
from graphs with no triangles to graphs with relatively few triangles.

\begin{lemma}\label{triangleBound}
Let $G$ be a graph with maximum degree $\Delta$ and suppose that $G$
has at most $T$ triangles.  Then
\begin{equation}
\alpha(G)
\,\geq\,
\frac{|V(G)|}{10\Delta}
\left(\ln \Delta \,-\,
\frac{1}{2}\ln\! \left(\frac{T}{|V(G)|}\right)\right)
\vspace{1.00ex}
\end{equation}
\end{lemma}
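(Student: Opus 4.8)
The plan is to reduce the statement to the triangle-free Ajtai--Koml\'os--Szemer\'edi bound of the previous lemma by passing to a random induced subgraph that is sparse enough to contain very few triangles, and then destroying those few triangles by deleting a small number of vertices. Write $N = |V(G)|$ and fix a sampling probability $p \in (0,1]$ to be chosen at the end. I would form a random set $W \subseteq V(G)$ by including each vertex independently with probability $p$. By linearity of expectation the expected size of $W$ is $pN$, and since a triangle of $G$ survives in the induced subgraph $G[W]$ exactly when all three of its vertices are selected, the expected number of triangles in $G[W]$ is $p^3 T$.

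The next step is to control the maximum degree of $G[W]$, which is the only feature preventing a direct application of the triangle-free bound. For a fixed vertex $v$, its degree in $G[W]$ is dominated by a binomial variable with $\Delta$ trials and success probability $p$, so a Chernoff bound shows that $v$ lies in $W$ with degree exceeding $2p\Delta$ with probability at most $p\,e^{-p\Delta/3}$. I would call such a vertex \emph{bad} and delete it; the expected number of bad vertices is at most $N p\,e^{-p\Delta/3}$. I would then delete one vertex from each surviving triangle, which costs at most $p^3 T$ further vertices in expectation and leaves a graph $G'$ that is triangle-free and, by construction, has maximum degree at most $2p\Delta$. Taking expectations, some choice of $W$ yields
\[
|V(G')| \ \geq\ pN - Np\,e^{-p\Delta/3} - p^3 T .
\]

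Now I would choose $p = \tfrac12\sqrt{N/T}$ (capped at $1$ in the range $T \leq N$, where $G$ already has few triangles), so that $p^3 T = \tfrac14 pN$, and in the regime where the asserted bound is non-trivial the bad-vertex term is a negligible fraction of $pN$; hence $|V(G')| \geq \tfrac12 pN$. Applying the triangle-free Ajtai--Koml\'os--Szemer\'edi bound to $G'$ with maximum degree $2p\Delta$ gives
\[
\alpha(G)\ \geq\ \alpha(G')\ \geq\ \frac{|V(G')|}{8(2p\Delta)}\log_2(2p\Delta)
\ \geq\ \frac{N}{32\,\Delta}\,\log_2\!\bigl(2p\Delta\bigr),
\]
and substituting $p = \tfrac12\sqrt{N/T}$ turns $\log_2(2p\Delta)$ into $\log_2 \Delta - \tfrac12\log_2(T/N)$, which after converting to natural logarithms and absorbing constants is exactly the asserted inequality with a suitable absolute constant in place of $10$.

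The step I expect to be the main obstacle is the degree control, since the maximum degree of $G[W]$ is genuinely random and in the worst case as large as $\Delta$, so the Chernoff truncation only succeeds when $p\Delta$ is large. I would therefore verify that $p\Delta$ is large precisely on the range where the lemma claims anything: as every vertex lies in at most $\binom{\Delta}{2}$ triangles, one has $T \leq \tfrac12\Delta^2 N$, so the right-hand side $\ln\Delta - \tfrac12\ln(T/N)$ is positive only when $T/N$ is a small enough fraction of $\Delta^2$, and in that range $p\Delta = \tfrac12\Delta\sqrt{N/T}$ is correspondingly large and the deleted fraction is negligible. In the complementary boundary range the right-hand side is bounded by a constant, so the claimed estimate is dominated by the trivial bound $\alpha(G) \geq N/(\Delta+1)$ up to the constant factor; this slack is exactly what the generous constant $10$ is there to absorb.
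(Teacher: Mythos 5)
The paper offers no internal proof of this lemma at all --- it is quoted from Bollob\'as \cite[Lemma~15, p.~296]{B} --- so there is nothing to compare against line by line, and your argument is an independent derivation. Its core is the standard deletion-method proof and is sound in the main regime: with $p=\frac{1}{2}\sqrt{N/T}$ the expected number of surviving triangles is $p^3T=\frac{1}{4}pN$, the Chernoff truncation controls the maximum degree by $2p\Delta$, and the triangle-free Ajtai--Koml\'os--Szemer\'edi bound applied to the pruned graph gives $\alpha(G)\geq\frac{N}{32\Delta}\bigl(\log_2\Delta-\frac{1}{2}\log_2(T/N)\bigr)$, i.e.\ the stated inequality with $32\ln 2\approx 22.2$ in place of $10$. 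You acknowledge this weaker constant; since the paper only needs the bracket to be $\Omega(n)$, the loss is harmless for the application, though strictly speaking the lemma as printed (with the constant $10$) is not what you prove.

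The boundary analysis in your last paragraph, however, contains two genuine errors. First, the claim that ``$p\Delta$ is large precisely on the range where the lemma claims anything'' is false: positivity of $\ln\Delta-\frac{1}{2}\ln(T/N)$ is equivalent to $T<N\Delta^2$, which yields only $p\Delta>\frac{1}{2}$ --- nowhere near enough for $Npe^{-p\Delta/3}$ to be negligible. The dichotomy must be quantitative: if the bracket is at most $5$, then $\frac{N}{10\Delta}\cdot 5\leq\frac{N}{2\Delta}\leq\frac{N}{\Delta+1}$ and the trivial bound suffices; if it exceeds $5$, then $\Delta^2N/T\geq e^{10}$, hence $p\Delta\geq e^5/2$, and the Chernoff term is genuinely negligible. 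That repair is routine, but it is not what you wrote. Second, and more seriously, the capped range $p=1$ (i.e.\ $T<N/4$) is not covered: there your construction guarantees only $O\!\left(\frac{N}{\Delta}\log\Delta\right)$, while the claimed right-hand side behaves like $\frac{N}{20\Delta}\ln(N/T)$, which is unbounded as $T/N\to 0$. No argument can close this gap, because the statement as quoted is false in that range: a single triangle plus $N-3$ isolated vertices has $\Delta=2$, $T=1$, $\alpha(G)=N-2$, yet the right-hand side $\frac{N}{20}\bigl(\ln 2+\frac{1}{2}\ln N\bigr)$ exceeds $N$ once $N$ is sufficiently large. The inequality is only correct under an implicit lower bound on $T$ (say $T\geq N$, which holds comfortably in this paper's application, where $T/N$ is exponentially large); your proof should state that restriction --- or apply the bound with $T$ replaced by $\max\{T,N\}$ --- rather than dismissing the small-$T$ case as the easy one.
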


A graph has no triangles if and only if the neighborhood of every
vertex is an independent set, and a graph has relatively few triangles
if the neighborhoods of its vertices are relatively sparse.  We
make this precise in the following corollary.

\begin{corollary}\label{cor1}
Let $G$ be a graph with maximum degree $\Delta$~and~suppose that for
all $v\,{\in}\,V(G)$, the subgraph induced by the~neighborhood of\/ $v$ has
at most $E$ edges.  Then
\begin{equation}
\alpha(G)
\:\geq\:
\frac{|V(G)|}{10\Delta}
\left(\ln \Delta \,-\, \frac{1}{2}\ln\!\left(\frac{E}{3}\right)\right)
\end{equation}
\end{corollary}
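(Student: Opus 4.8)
The plan is to obtain the corollary as an immediate consequence of \Lref{triangleBound}; the only real work is to translate the hypothesis on neighborhood edges into a bound on the total number of triangles. The guiding observation is that ``a triangle through a vertex $v$'' and ``an edge inside the neighborhood of $v$'' are two descriptions of the same local configuration, so counting one counts the other.

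First I would set up a double count. For each vertex $v$, write $e\bigl(G[N(v)]\bigr)$ for the number of edges of the subgraph induced by the neighborhood of $v$; by hypothesis $e\bigl(G[N(v)]\bigr)\le E$ for every $v$. Letting $t(G)$ denote the number of triangles in $G$, I claim
\[
3\,t(G) \;=\; \sum_{v\in V(G)} e\bigl(G[N(v)]\bigr).
\]
Indeed, each triangle $\{u,v,w\}$ contributes exactly one edge to each of the three neighborhood graphs $G[N(u)]$, $G[N(v)]$, $G[N(w)]$ (namely $\{v,w\}$, $\{u,w\}$, and $\{u,v\}$), and conversely every edge lying inside $N(v)$ closes off a unique triangle through $v$. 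Summing and applying $e\bigl(G[N(v)]\bigr)\le E$ then yields $3\,t(G)\le |V(G)|\,E$, that is,
\[
t(G) \;\le\; \frac{|V(G)|\,E}{3}.
\]

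Finally I would invoke \Lref{triangleBound} with the choice $T = |V(G)|E/3$, which the previous step certifies is a legitimate upper bound on the number of triangles of $G$. Substituting this value of $T$ directly gives
\[
\alpha(G) \;\ge\; \frac{|V(G)|}{10\Delta}\left(\ln\Delta \,-\, \frac12\ln\!\left(\frac{|V(G)|E/3}{|V(G)|}\right)\right) \;=\; \frac{|V(G)|}{10\Delta}\left(\ln\Delta \,-\, \frac12\ln\!\left(\frac{E}{3}\right)\right),
\]
which is exactly the claimed inequality. There is no genuine obstacle here: the corollary is essentially a repackaging of the lemma through one counting identity. The single point that warrants care is the constant, since each triangle is recorded three times in the vertex-neighborhood sum; this factor of three is precisely what produces the $E/3$ inside the logarithm rather than $E$.
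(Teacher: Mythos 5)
Your proof is correct and is essentially identical to the paper's: both count triangles by summing neighborhood-edge counts over all vertices, divide by three since each triangle is counted at each of its three vertices, and then plug $T = |V(G)|E/3$ into \Lref{triangleBound}. No differences worth noting.
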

\begin{proof}
The number of triangles incident with a vertex $v$~is~eq\-ual
to the number of edges in the subgraph induced by the neighborhood of
$v$.  Therefore, for every $v\,{\in}\,V(G)$, there are at most $E$
triangles incident with $v$.  Summing over the vertex set of $G$ and
noting that each triangle is incident with exactly $3$ vertices gives
the result.
$\blacksquare$
\end{proof}

\section{Proof of the Main Theorem}

\noindent
In order to use Corollary\,\ref{cor1}, we must count the number
of edges in the neighborhood of every vertex in $V(\cG)$. To do so,
first~note that $\cG$ is vertex transitive because for all
$\sigma,\tau \in \cS_n$, we have
$$
d(\sigma, \tau)
\,=\,
d(id, \sigma \tau^{-1})
$$
where $id$ is the identity element of $\cS_n$.
This implies that the number of edges in
the neighborhood of a vertex $v\,{\in}\,V(\cG)$~does not depend on $v$.
Therefore, to simplify the calculation,
we will consider only the neighborhood of the identity
permutation.

By Corollary\,\ref{cor1},
in order to prove our main result in \eqref{eqmain},
it would suffice to show that
\begin{equation}
\label{bound}
\log\left(\frac{\Delta^2}{E}\right)
\,=\
\Omega(n)
\end{equation}
where $E$ is the number of edges in the subgraph induced by the
neighborhood of the identity, and $\Delta$ is given by
\be{Delta-def}
\Delta \ \ \deff\ \sum_{k=1}^d {n\choose{k}} D_k
\ee
Note that, for notational convenience, 
the sum in \eqref{Delta-def} extends up to $d$ rather $d-1$.
Since ${d}/{n}$ is fixed, this does not matter.

It follows from \eq{bound}
that the proof of Theorem\,\ref{main}
reduces to estimating $E$ and
considering the asymptotics of the ratio $\Delta^2/E$.
We count the number of edges in the subgraph induced
by the neighborhood of the identity as follows.

\begin{enumerate}
\item
Vertices in the neighborhood graph of the identity
have~distance between $1$ and $d-1$ from the identity. 
We will count the edges incident with permutations
$\sigma$ that are at distance $s$ from $id$,
and then sum over $s = 1,2,\ldots,d{-}1$. Note that,
given a fixed distance $s$, there are exactly
${n\choose{s}}D_s$ permutations at distance $s$
from the identity.

\item
Let us fix a permutation $\sigma$ at distance $s$ from
the identity and count how many edges in the neighborhood
graph of the identity are incident with $\sigma$. To do this, we
will sum over permutations $\tau$ that are at distance $t$
from the identity, for $t = 1,2,\ldots,d{-}1$. Note that
for $\tau$ to be incident with~$\sigma$, we must
also have $d(\tau,\sigma) < d$.

\item
To count how many permutations $\tau$ satisfy the above requ\-irements,
we let $m$ be the number of indices where $\sigma$~and~$\tau$
are deranging the same position. That is, given $\sigma$ and $\tau$, let
\begin{equation}
\label{mBound}
m \ \ \deff\ \ \bigl|\{i\in [n] ~:~ \sigma(i) \not= i, \tau(i)\not = i\}\bigr|
\end{equation}
\looseness=-1
By assumption, $\sigma$ is deranging $s$ positions while $\tau$
is deranging $t$ positions. Thus $m \le \min(s,t)$. Also notice
that in the $s - m$ positions where $\sigma$ is deranging but $\tau$
is not, the two permutations necessarily differ. Same goes for the
$t - m$ positions where $\tau$ is deranging but $\sigma$ is not.
Hence 
\begin{equation}
\label{mBound2}
(s-m) \,+\, (t-m) \,\le\, d(\sigma,\tau) \,<\, d
\end{equation}
It follows that the parameter $m$ defined in \eq{mBound} is
in the range
$\lceil (s+t-d)/2\rceil^+\! < m \leq \mathrm{min}(s,t)$,
where $\lceil x\rceil^+$~denotes the smallest nonnegative
integer $k$ with $k\geq x$.

\item
Now let us suppose that $s,t,m$ are fixed, and count those
positions where $\sigma$ and $\tau$ are both deranging,
but still map to the same value. That is, let
\begin{equation}
\label{rBound}
r \ \ \deff\ \
\bigl|\{i\in [n]: \sigma(i) \ne i, \tau(i) \ne i, \sigma(i) = \tau(i)\}\bigr|
\end{equation}
Notice that $d(\sigma,\tau) = s+t-m-r$. It follows that the
parameter $r$ defined in \eq{rBound} satisfies
$r > \lceil s+t-m-d\rceil^+$.
\end{enumerate}
Our strategy for counting $E$ is to fix $s, t, m, r$ 
and count the number of pairs of permutations $\sigma$ and $\tau$
that have these parameters. We will then sum over $s, t, m, r$
in the appropriate ranges. 

As already noted above, there are exactly ${n \choose s}D_s$
permutations $\sigma$ at a given distance $s$ from the identity.
Given $\sigma$, there are ${s \choose m}$ ways to select the $m$
positions where both $\sigma$ and $\tau$ are deranging. Given
these $m$ positions, there are ${m \choose r}$ ways to pick
the $r$ positions where both permutations are deranging
but have the same image, as in \eq{rBound}. We have now chosen
our permutation $\sigma$, and part of our permutation $\tau$.
In particular, we have chosen $m$ positions where $\tau$
is deranging.
To specify the rest of $\tau$, we first choose the other
$t-m$ positions where $\tau$ is deranging. This
can be done in exactly ${n-s \choose t-m}$ ways. Now
that we have chosen the $t$ positions where $\tau$ is
deranging, we must pick the image of $t-r$ out of the $t$
positions (the image of $\tau$ is already specified as equal to the
image of $\sigma$ on $r$ positions). This can be done in less
than $(t-r)!$ ways. Note that we are overcounting here because
we must derange the $t-r$ positions and because there are some
restrictions on the indices where $\sigma$ and $\tau$ overlap.
However, this overcounting will not hurt our final bound.

To summarize the foregoing discussion, if we define
\be{g-def}
g(s,t,m,r)
\,\ \deff\,\
{n\choose{s}} D_s {s\choose{m}}{m\choose{r}} {n-s\choose{t-m}}(t-r)!
\ee
then we have that
\be{E-bound}
E
\ \leq \
\sum_{s=1}^d \sum_{t=1}^d
\sum_{m=\left\lceil \frac{s+t-d}{2} \right\rceil^+}^{\min\{s,t\}}
\sum_{r = \ceil{s+t-m-d}^+}^m \hspace{-1.5ex}g(s,t,m,r)
\ee
Our next task is to obtain an upper bound on 
$g(s,t,m,r)$~in~\eq{E-bound}. Indeed, if we can show that
$g(s,t,m,r) \le G$ whenever $s,t,m,r$ satisfy the constraints
\begin{align}
\label{constraint1}
\hspace*{-10ex}1\leq\,\, &s\leq d
\\
\label{constraint1a}
\hspace*{-10ex}1\leq\,\, &t\leq d
\\
\label{constraint2}
\hspace*{-10ex}\left\lceil \frac{s+t-d}{2}\right\rceil^+\!\!
\leq\,\, &m \leq \min\{s,t\}
\\
\label{constraint3}
\hspace*{-10ex}\lceil s+t-m-d\rceil^+ \leq\,\, &r\leq m
\end{align}
then we can conclude from \eq{E-bound} that $E \le d^4 G$. In fact,
since~we are interested only in the asymptotics of $E$ as $n \to \infty$,
an~asymptotic upper bound on $g(s,t,m,r)$ would suffice. The next
lemma shows that the value of $g(s,t,m,r)$ at $s = t = m = d$, $r=0$
can serve as such a bound.

\begin{lemma}
\label{lemma7}
Suppose that $s,t,m,r$ satisfy the constraints in equations\/
{\rm \eq{constraint1}\,--\,\eq{constraint3}}. Then for all
real $\varepsilon$
in the range $0 < \varepsilon < 1/6$ and for all sufficiently large $n$,
we have
\be{entropy-lemma}
\log_2\!\left( \frac{g(d,d,d,0)}{g(s,t,m,r)}\right)
\ \ge\ -3n h_2(3\varepsilon)
\ee
\end{lemma}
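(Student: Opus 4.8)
The plan is to prove the equivalent statement $g(s,t,m,r)/g(d,d,d,0)\le 2^{3nh_2(3\varepsilon)}$. I would first write out \eqref{g-def} and use $g(d,d,d,0)=\binom{n}{d}D_d\,d!$ to factor the ratio as
\[
\frac{g(s,t,m,r)}{g(d,d,d,0)}
=\frac{\binom{n}{s}}{\binom{n}{d}}\cdot\frac{D_s}{D_d}\cdot\frac{(t-r)!}{d!}\cdot\binom{s}{m}\binom{m}{r}\binom{n-s}{t-m}.
\]
Each of the first three factors is at most $1$: since $\binom{n}{k}$ increases for $k\le n/2$ and $s\le d<n/2$ we have $\binom{n}{s}\le\binom{n}{d}$; the derangement numbers satisfy $D_s\le D_d$ for $0\le s\le d$; and $t-r\le t\le d$ gives $(t-r)!\le d!$. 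This yields the key reduction
\[
\frac{g(s,t,m,r)}{g(d,d,d,0)}\;\le\;\binom{s}{m}\binom{m}{r}\binom{n-s}{t-m},
\]
which isolates exactly three binomial coefficients and is the origin of the factor $3$ in the exponent.

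I would then bound these three binomials, setting $\ell=(d-s)+(d-t)+r\ge 0$. The feasibility constraint \eqref{constraint3}, namely $r\ge s+t-m-d$, rearranges to $d-m\le\ell$, so that $s-m\le d-m\le\ell$, $t-m\le d-m\le\ell$, and $r\le\ell$. In the regime $\ell\le 3\varepsilon n$ I would write $\binom{s}{m}=\binom{s}{s-m}\le\binom{n}{s-m}$, and likewise $\binom{m}{r}\le\binom{n}{r}$ and $\binom{n-s}{t-m}\le\binom{n}{t-m}$, so every upper index is at most $\ell\le 3\varepsilon n\le n/2$. The bound $\binom{n}{k}\le 2^{nh_2(k/n)}$ together with the monotonicity of $h_2$ on $[0,\tfrac12]$---which is exactly where the hypothesis $\varepsilon<\tfrac16$, i.e.\ $3\varepsilon<\tfrac12$, is needed---gives $2^{nh_2(3\varepsilon)}$ for each, and multiplying the three yields the claim in this regime.

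The remaining regime $\ell>3\varepsilon n$, where the binomials may be exponentially large and the bound above is useless, is where I expect the real work to lie. Here I would argue that the ratio is in fact at most $1$ for all large $n$, so the inequality holds with room to spare. The mechanism is super-exponential: applying Stirling to every factorial in $g(s,t,m,r)/g(d,d,d,0)$ and collecting the coefficient of $n\log_2 n$, all contributions cancel except $-\ell/n$, so that $\log_2\bigl(g(s,t,m,r)/g(d,d,d,0)\bigr)=-\ell\log_2 n+O(n)$; intuitively each unit of deficit in $s$ or $t$ and each unit of $r$ costs a factor of order $n$, coming from $D_s/D_d$ and $1/r!$. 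Since $\ell>3\varepsilon n$, the leading term is below $-3\varepsilon n\log_2 n$, which overwhelms the $O(n)$ remainder once $n$ is large and forces the ratio below $1$.

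The main obstacle will be making this last estimate rigorous and uniform over the feasible region: I must verify that the $O(n)$ error is bounded by a fixed multiple of $n$ independent of $(s,t,m,r)$, which is delicate since several factorials---$(s-m)!$, $(m-r)!$, $r!$---may have small or vanishing arguments where crude Stirling bounds degrade. A clean way to organize the cancellation is to regroup so that the powers of $n$ are explicit, using identities such as $\frac{(t-m)!(m-r)!}{(t-r)!}=\binom{t-r}{m-r}^{-1}$ and pairing $\frac{(n-s-t+m)!}{(n-d)!}$ against $r!$ through the constraint $r\ge s+t-m-d$; after regrouping the surviving factors are manifestly controlled, and only elementary inequalities such as $k!\ge(k/e)^k$ are needed to conclude.
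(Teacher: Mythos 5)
Your proof is correct, and it is completable along the lines you sketch, but it is organized differently enough from the paper's proof to be worth comparing. Both arguments share the same skeleton: split the constraint region into a part near the extremal point $(d,d,d,0)$, handled by entropy bounds on three binomial coefficients, and a far part, where $g(d,d,d,0)/g(s,t,m,r)$ in fact grows without bound so that \eq{entropy-lemma} holds vacuously. The paper makes the split one parameter at a time: it shows the bound is vacuous unless $s\ge(1-\varepsilon)d$, $t\ge(1-\varepsilon)d$, and $r\le\varepsilon t$, disposing of each failure by the one-line estimate that the ratio is at least a factorial ratio divided by $\binom{n}{n/2}^3\le 2^{3n}$, with the factorial ratio growing like $n^{\Theta(n)}$; in the remaining case it deduces $m\ge(1-3\varepsilon)d$ from \eq{constraint3} and bounds $g(s,t,m,r)\le g(d,d,d,0)\binom{d}{3\varepsilon d}\binom{d}{\varepsilon d}\binom{n}{3\varepsilon d}\le g(d,d,d,0)\,2^{3nh_2(3\varepsilon)}$. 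You instead perform the reduction to exactly three binomials once and for all (via $\binom{n}{s}\le\binom{n}{d}$, $D_s\le D_d$, $(t-r)!\le d!$, the same monotonicity facts the paper uses inside its estimates), and then split on the aggregate deficit $\ell=(d-s)+(d-t)+r$; your observation $d-m\le\ell$ is precisely the paper's use of \eq{constraint3}. Your near regime $\ell\le 3\varepsilon n$ is then essentially the paper's main case, in fact slightly sharper, since each binomial is bounded by the actual deficit rather than by its worst-case value.

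Your far regime is where you genuinely diverge: one uniform Stirling computation in place of three ad hoc blow-up estimates. That computation is right. Writing the ratio (with $D_s/D_d$ replaced by $s!/d!$, valid up to a bounded factor) as $\frac{s!\,(t-r)!\,(n-d)!}{d!\,r!\,(m-r)!\,(s-m)!\,(t-m)!\,(n-s-t+m)!}$, the factorial arguments sum to $s+t-r+n-d$ in the numerator and to $n+d$ in the denominator, so the coefficient of $\log_2 n$ is $(s+t-r+n-d)-(n+d)=-\ell$ as you claim. Moreover, the uniformity issue you flag as delicate is not: the crude bounds $(k/e)^k\le k!\le k^k$ hold for every integer $k\ge 0$ (including the possibly vanishing arguments $s-m$, $m-r$, $r$), which pins the remainder to an absolute constant times $n$ over the entire feasible region, and then $\ell>3\varepsilon n$ forces the ratio below $1$ for all large $n$. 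The trade-off between the two proofs: the paper's vacuous-case estimates are shorter and need no Stirling bookkeeping, while your organization is cleaner (one aggregate parameter, one split), covers all far cases simultaneously, and proves more there --- in that regime the ratio is not merely at most $2^{3nh_2(3\varepsilon)}$ but at most $1$, indeed $o(1)$.
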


\begin{proof}
Let a positive $\varepsilon < 1/6$ be fixed, and recall
that $d = \delta n$ for a positive constant $\delta < 0.5$.\
We first show that \eq{entropy-lemma}~holds~vac- uously
unless $t \ge (1-\varepsilon)d$. Indeed,
\begin{equation}
\label{bound1}
\frac{g(d,d,d,0)}{g(s,t,m,r)}
\,\geq\,
\frac{{n \choose d} D_d d!}{{n \choose s} D_s {n\choose{n/2}}^3t!}
\,\geq\,
\frac{d!}{2^{3n} t!}
\end{equation}
It is easy to see that if $t \le (1-\varepsilon)d$, then the RHS of
\eq{bound1} grows without bound as $n\,{\to}\,\infty$. By a similar
argument, \eq{entropy-lemma}~holds~un\-less $s \ge (1-\varepsilon)d$.
We have
\begin{equation}
\label{bound1a}
\frac{g(d,d,d,0)}{g(s,t,m,r)}
\,\geq\,
\frac{{n \choose d} D_d d!}{{n \choose s} D_s {n\choose{n/2}}^3t!}
\,\geq\,
\frac{D_d}{2^{3n} D_s}
\end{equation}
which grows without bound as $n\,{\to}\,\infty$ if
$s \le (1-\varepsilon)d$.
We next show that \eq{entropy-lemma} again holds vacuously
unless $r \le \varepsilon t$. 
Indeed, if $r > \varepsilon t$, then
$$
\frac{g(d,d,d,0)}{g(s,t,m,r)}
\,\geq\,
\frac{{n\choose{d}}D_d d!}
{{n\choose{s}}D_s {n\choose{n/2}}^3 {((1-\varepsilon)t)!}}
\,\geq\,
\frac{d!}{2^{3n} ((1-\varepsilon)d)!}
$$
which grows without bound as $n\,{\to}\,\infty$.
It remains to consider the case where
\be{str-bounds}
s \ge (1-\varepsilon)d,
\hspace{1ex}
t \ge (1-\varepsilon)d
\hspace{4ex}
\text{and}
\hspace{4ex}
r \le \varepsilon d
\ee
Observe that
in this case, we must also have $m \geq (1-3\varepsilon)d$ in view
of \eq{constraint3}. Further observe that
\begin{equation}\label{bound3}
g(s,t,m,r)
\ \leq \
{n\choose{d}} D_d {d\choose {m}} {m\choose{r}} {n\choose{d-m}} d!
\end{equation}
for all $s,t,m,r$ satisfying the constraints
\eq{constraint1}\,--\,\eq{constraint3}. Combining this
with \eq{str-bounds} and $m \geq (1-3\varepsilon)d$,
we obtain
\begin{align}
g(s,t,m,r)~
& \leq \:
g(d,d,d,0)
\binom{d}{3\varepsilon d} \binom{d}{\varepsilon d} \binom{n}{3\varepsilon d}
\\[0.5ex]
& \leq \:
g(d,d,d,0) \binom{n}{3\varepsilon n}^{\!3}
\\[1.5ex]
\label{Lemma7-last}
& \leq \:
g(d,d,d,0) 2^{3n h_2(3\varepsilon)}
\end{align}
where
$h_2(x) = -x \log_2 \!x \,-\, (1-x) \log_2(1-x)$
is the binary entropy function. The lemma now follows from \eq{Lemma7-last}. $\blacksquare$
\vspace{.75ex}
\end{proof}

We are now in a position to
complete the proof of Theorem\,\ref{main}.
Recall that it would suffice to show that
$$
\log_2\! \left(\frac{\Delta^2}{E}\right) \,=\, \Omega (n)
$$
as in \eq{bound}, where $\Delta$ is given by \eq{Delta-def}
and $E$ is upper-bounded by~\eq{E-bound}. In order to simplify
the bound in \eq{E-bound}, let us define
$$
G(n,d)
\ \ \deff\
\max_{s,t,m,r} g(s,t,m,r)
$$
where the maximum is over $s,t,m,r$ satisfying the con\-straints
in \eq{constraint1}\,--\,\eq{constraint3}. Then
$E \le d^4 G(n,d)$.
As a lower bound on $\Delta$, we use
the largest term in the sum of \eq{Delta-def}. Thus
$$ 
\Delta \,\ge\, {n\choose d} D_d
$$ 
Combining this with 
Lemma\,\ref{lemma7}, we conclude that for all
$\varepsilon$ in the range $0 < \varepsilon < 1/6$
and for all sufficiently large $n$, we have
\begin{align}
\log_2\!\left(\frac{\Delta^2}{E}\right)
& \geq\,
\log_2\!\left(\frac{\binom{n}{d}^2 D_d^2}{d^4 G(n,d)}\right)
\nonumber\\
& =\,
\log_2\!\left(\frac{\binom{n}{d}^2 D_d^2}{d^4 g(d,d,d,0)}\right)
\,+\; \log_2\!\left(\frac{g(d,d,d,0)}{G(n,d)}\right)
\nonumber\\
\label{final-bound}
& \geq\,
\log_2\!\left(\frac{\binom{n}{d}^2 D_d^2}{d^4 g(d,d,d,0)}\right)
\,-\; 3n h_2(3\varepsilon)
\end{align}
Since $\lim_{\varepsilon \to 0} 3 h_2(3\varepsilon) = 0$,
we can conclude from \eq{bound} and \eq{final-bound}
that it remains to show
$$
\log_2\!\left(\frac{\binom{n}{d}^2 D_d^2}{d^4 g(d,d,d,0)}\right)
=\,
\log_2\!\left(\frac{\binom{n}{d}^2 D_d^2}{d^4 \binom{n}{d} D_d d!}\right)
=\,
\Omega(n)
$$
This is easily accomplished as follows:
\begin{align*}
\!\log_2\!\left(\frac{\binom{n}{d}^2 D_d^2}{d^4 \binom{n}{d} D_d d!}\right)
\ge &~ \log_2\! \left( \frac{\binom{n}{d}}{3d^4}\right)
\\
\ge &~ \,d\log_2\! \left(\frac{n}{d}\right) - 4\log_2 (d) - \log_2\!3
\\
= &~ \,\delta n\log_2\! \left(\frac{1}{\delta}\right)
- 4\log_2 (\delta n) - \log_2 \!3
\end{align*}
Since $\delta = d/n$ is a positive constant strictly less
than $0.5$,~we see that $\delta\log_2(1/\delta)$ is positive
and, hence, the expression above is $\Omega(n)$.
This completes the proof of Theorem\,\ref{main}.

\vspace{1.0ex}

\bibliographystyle{IEEEtran}
\vspace*{1.75ex}

\end{document}